\documentclass[11pt]{amsart}
\usepackage{amssymb, latexsym, amsmath, amsfonts, hyperref, enumitem, fancyhdr}
\usepackage[mathscr]{eucal}

\newtheorem{thm}{Theorem}[section]

\newtheorem{lem}[thm]{Lemma}

\theoremstyle{definition}
\newtheorem{defn}[thm]{Definition}

\theoremstyle{remark}
\newtheorem{rem}[thm]{Remark}
\numberwithin{equation}{section}

\setlength{\oddsidemargin}{0in} \setlength{\evensidemargin}{0in}
\setlength{\textwidth}{6.3in} \setlength{\topmargin}{-0.2in}
\setlength{\textheight}{9in}

%\newlist{enumthm}{enumerate}{1}
%\setlist[enumthm]{label=(\roman*)}

\hypersetup{
  colorlinks,
  citecolor=black,
  linkcolor=black,
  urlcolor=blue}
\theoremstyle{plain}

%custom environments
%
  {\end{enumerate}}

\newcommand{\D}{\mathbb{D}}
\newcommand{\C}{\mathbb{C}}

\newcommand{\R}{\mathbb{R}}
\newcommand{\h}{\mathbb{H}}

\newcommand{\N}{\mathbb{N}}
\newcommand{\T}{\mathbb{T}}

%\newlist{enumthm}{enumerate}{1}
%\setlist[enumthm]{label=(\roman*)}

\hypersetup{
  colorlinks,
  citecolor=blue,
  linkcolor=blue,
  urlcolor=blue}

\begin{document}
\title{Operator Space Structures on $\ell^1(n)$}
\thanks{The first named author was supported by the NBHM, Government of India.}
\thanks{The second named author was supported by CSIR, Government of India.}

\keywords{Operator Spaces, Finite dimensional embedding, MIN structure}

{\let\thefootnote\relax\footnote{The results in this paper are from the first author's thesis ``The Carath\'{e}odory-Fej\'{e}r Interpolation Problems and the von-Neumann Inequality" submitted to the Indian Institute of Science, Bangalore-560012.}}

\author{Rajeev Gupta}
\author{Md. Ramiz Reza}
\address{Rajeev Gupta: Department of Mathematics, Indian Institute of Science, Bangalore}
\email{rajeev10@math.iisc.ernet.in}

\address{Md. Ramiz Reza: Department of Mathematics, Indian Institute of Science, Bangalore}
\email{ramiz@math.iisc.ernet.in}

\pagestyle{headings}

\begin{abstract}
We show that the complex normed linear space 
$\ell^1(n),$  $n>1,$ 
has no isometric embedding into 
$k\times k$ complex matrices for any $k\in \N$
and discuss a class of 
infinite dimensional operator space structures on it.
\end{abstract}

\maketitle

\section{Introduction}
In this paper, all the normed linear spaces considered are over the field of complex numbers unless specified. It is well known that there are isometric embeddings of real $\ell^1(n)$ into real $\ell^\infty(k)$ for some $k$ and hence into the space of $k\times k$ real matrices $M_k(\R).$ 
%(\textcolor{red}{give reference})
However, we prove that $\ell^1(n),\,n>1,$ 
has no isometric embedding into $M_k$ 
for any $k\in \N.$ 
This shows that there is no operator space structure 
on $\ell^1(n),\,n>1,$ which can be induced by 
any $k\times k$ matrices $A_1,\ldots,A_n.$ 
Furthermore, we study the operators space structures on $\ell^1(n)$.  
We recall some definitions first. 
 \begin{defn}
	An abstract operator space 
	is a normed linear space $V$ 
	together with a sequence of  
	norms $\|\cdot\|_k$ defined on the linear space 
	$$M_k(V):=\big \{\big (\!\! \big (v_{ij} \big )\!\! \big )\vert v_{ij} \in V, 1\leq i,j \leq k \big \},\,\, \forall k\in \mathbb N,$$	
	with the understanding that 
	$\|\cdot\|_1$ is the norm of $V$ and 
	the family of norms $\|\cdot\|_k$ 
	satisfies the compatibility conditions:
	\begin{itemize}
		\item[1.] $\|T\oplus S\|_{p+q} = \max\big\{\|T\|_p,\|S\|_q\big\}$
		and
		\item[2.] $\|ASB\|_q \leq \|A\|_{op} \|S\|_p \|B\|_{op}$
	\end{itemize} 
	for all $S\in M_q(V),T\in M_p(V),$ 	
	$A\in M_{q\times p}(\C)$ and $B\in M_{p\times q}(\C).$
\end{defn}

Let $(V,\|\cdot\|_k )$ and 
$(W,\|\cdot\|_k)$ be two operator spaces. 
A linear bijection 
$T:V\to W$ is said to be a complete isometry 
if $T\otimes I_k:(M_k(V),\|\cdot\|_k)\to (M_k(W),\|\cdot\|_k)$
is an isometry for every $k\in \N$. 
Operator spaces $(V,\|\cdot\|_k )$ and 
$(W,\|\cdot\|_k)$ are said to be completely isometric
if there is a linear complete isometry $T:V\to W$. 
A  well known theorem of Ruan 
says that any operator space $(V,\|\cdot\|_k)$ 
can be embedded, completely isometrically, 
into $C^*$-algebra $\mathcal{B}(\h)$ 
for some Hilbert space $\h$.
There are two natural operator space structures
on any normed linear space $V,$ which may coincide.  
These are the MIN and the MAX 
operator space structures defined below.
  
\begin{defn}[MIN]
	The MIN operator space structure denoted by MIN($V$) 
	on a normed linear space $V$ 
	is obtained by the isometric embedding of $V$ 
	into the $C^*$-algebra $C((V^*)_1)$, 
	the space of continuous functions 
	on the unit ball $(V^*)_1$ 
	of the dual space $V^*$. 
	Thus for $\big (\!\! \big (v_{ij} \big )\!\! \big )$ in $M_k(V)$, 
	we set
	\[\big\|\big (\!\! \big (v_{ij} \big )\!\! \big )\big\|_{MIN} = \sup\left\{\big\|\big (\!\! \big (f(v_{ij}) \big )\!\! \big )\big\| : f\in (V^*)_1\right\},\]
	where the norm of a scalar matrix 
	$\big (\!\! \big (f(v_{ij}) \big )\!\! \big )$ 
	is the operator norm in $M_k$. 
\end{defn}

\begin{defn}[Max]
	Let $V$ be a normed linear space and  
	$\big (\!\! \big (v_{ij} \big )\!\! \big )\in M_k(V)$. 
	Define
	\[\big\|\big (\!\! \big (v_{ij} \big )\!\! \big )\big\|_{MAX} = \sup\left\{\big\|\big (\!\! \big (Tv_{ij} \big )\!\! \big )\big\|: T:V\to \mathcal{B}(\h)\right\},\]
	where the supremum is taken over 
	all isometry maps $T$ and all Hilbert spaces $\h$. 
	This operator space structure is denoted by MAX($V$).
\end{defn}

These two operator space structures 
are extremal in the sense 
that for any normed linear space $V$, 
MIN($V$) and MAX($V$) are 
the smallest and the largest 
operator space structures on $V$  respectively. 
For any normed linear space $V,$ 
Paulsen \cite{VP} associates a constant, 
namely, $\alpha(V),$ which is defined as following. 
\[\alpha(V):=\sup \left\{\|I_V\otimes I_k\|_{(M_k(V), \|\cdot \|_{\rm MIN}) \to (M_k(V), \|\cdot \|_{\rm MAX})}: k\in \mathbb N \right \}. \]
The constant $\alpha(V)$ is equal to $1$ 
if and only if 
$V$ has only one operator space structure on it. 
There are only a few examples of  normed linear spaces 
for which $\alpha(V)$ is known to be $1.$  These include   
$\alpha(\ell^{\infty}(2))=\alpha(\ell^1(2))=1.$ 
In fact, it is  known (cf. \cite[Page 77]{GP}) 
that $\alpha(V)>1$ if $\dim(V)\geq 3$.

The map $\phi:\ell^\infty(n)\to \mathcal{B}(\C^n)$ 
defined by $\phi(z_1,\ldots,z_n)={\rm diag}(z_1,\ldots,z_n),$  is an isometric embedding of the normed linear space $\ell^{\infty}(n)$ 
into the finite dimensional $C^*-$algebra $\mathcal{B}(\C^n).$ Clearly, this is the MIN structure of the normed linear space $\ell^\infty(n).$ 
We shall, however prove that 
there is no such finite dimensional isometric embedding for 
the dual space $\ell^1(n).$ 
Nevertheless, we shall construct, explicitly, a class of isometric infinite dimensional embeddings of $\ell^1(n)$. 
Unfortunately, all of these embeddings are completely isometric to the MIN structure.  In the end of this paper, using these embeddings and Parrott's example in \cite{GMParrott}, we construct an operator space structure on $\ell^1(3),$ which is distinct from the MIN structure.

\section{\texorpdfstring{$\ell^{1}(n)$}{TEXT} has no isometric embedding into any \texorpdfstring{$M_k$}{TEXT}}
In this section, we will show that 
there does not exist 
an isometric embedding of $\ell^{1}(n),$ $n>1,$ into any finite dimensional matrix algebra $M_k,$ $k\in\mathbb N.$ 
Without loss of generality, 
we prove this for the case of  $n=2.$ For the proof of the main theorem of this section, we shall need the following lemma. 

\begin{lem}\label{Reason for non finite dimensinality}
	For $k\in\N$ and 
	$\theta_1,\ldots,\theta_k\in [0,2\pi),$ 
	there exists $a_1,a_2\in\C$ 
	such that 
	\[\max\limits_{j=1,\ldots, k}\big|a_1 + e^{i\theta_j}a_2\big|< \big|a_1\big| + \big|a_2\big|.\]
\end{lem}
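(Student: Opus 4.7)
The plan is to exhibit explicit $a_1,a_2$ using the strict-triangle-inequality principle: $|z+w|=|z|+|w|$ for nonzero $z,w\in\C$ happens exactly when $z$ and $w$ share the same argument. Once $|a_1|,|a_2|>0$ are fixed, the equation $|a_1+e^{i\theta}a_2|=|a_1|+|a_2|$ pins down $\theta$ modulo $2\pi$. I want to choose the phase of $a_2$ so that none of the prescribed $\theta_j$ hit this pinned value.

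Concretely, I take $a_1=1$ and $a_2=e^{i\phi}$ with $\phi\in[0,2\pi)$ to be selected. Then
\[
|a_1+e^{i\theta_j}a_2|=|1+e^{i(\theta_j+\phi)}|,
\]
and this equals $2=|a_1|+|a_2|$ iff $e^{i(\theta_j+\phi)}=1$, i.e., iff $\phi\equiv -\theta_j\pmod{2\pi}$. The set of forbidden phases $\{-\theta_1,\ldots,-\theta_k\}\pmod{2\pi}$ is finite, while $[0,2\pi)$ is uncountable, so I can pick $\phi$ avoiding all of them. With such a choice, for every $j\in\{1,\ldots,k\}$ we have $e^{i(\theta_j+\phi)}\neq 1$, hence $|1+e^{i(\theta_j+\phi)}|<2$. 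Since the maximum over the finite index set $\{1,\ldots,k\}$ is attained, this maximum is itself strictly less than $2$.

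There is essentially no obstacle here beyond articulating the equality case of the triangle inequality cleanly; the rest is a counting argument comparing a finite exceptional set to the continuum $[0,2\pi)$. The only thing I would double-check in the write-up is the reduction $|a_1+e^{i\theta}a_2|=|a_1|+|a_2|\iff \theta\equiv\arg a_1-\arg a_2\pmod{2\pi}$ (when both are nonzero), which normalizes the general case to the special choice $a_1=1$, $a_2=e^{i\phi}$ made above.
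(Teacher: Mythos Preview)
Your proof is correct and follows essentially the same approach as the paper: both reduce the equality case $|a_1+e^{i\theta_j}a_2|=|a_1|+|a_2|$ to the condition $\theta_j\equiv \arg a_1-\arg a_2\pmod{2\pi}$ and then avoid the finitely many bad phase differences. The only cosmetic difference is that the paper expands the modulus via the cosine law while you invoke the equality case of the triangle inequality directly; your specialization to $a_1=1$, $a_2=e^{i\phi}$ is harmless and makes the write-up slightly cleaner.
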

\begin{proof} For any two non-zero complex numbers $a_1,a_2,$ we have  
\[\max\limits_{j=1,\ldots, k}\big|a_1 + e^{i\theta_j}a_2\big|
=\max\limits_{j=1,\ldots, k}\big||a_1| + e^{i(\theta_j+\phi_2-\phi_1)}|a_2|\big|,\] 
where $\phi_1$ and $\phi_2$ are the arguments of 
$a_1$ and $a_2$ respectively. 
Setting $\alpha_j=\theta_j+\phi_2-\phi_1,$ we have 
\begin{align*}
\max\limits_{j=1,\ldots, k}\big|a_1 + e^{i\theta_j}a_2\big|^2
&=\max\limits_{j=1,\ldots, k}\big||a_1| + e^{i\alpha_j}|a_2|\big|^2\\
&=\max\limits_{j=1,\ldots, k}\big||a_1|^2 + |a_2|^2+ 2|a_1a_2|cos\alpha_j\big|.
\end{align*}
Therefore
\[\max\limits_{j=1,\ldots, k}\big|a_1 + e^{i\theta_j}a_2\big|
=\big|a_1\big|+\big|a_2\big|
\] 
if and only if 
$\cos \alpha_j=1$ for some $j,$ 
that is, if and only if 
$\alpha_j=0$ for some $j$. 
Choose $a_1$ and $a_2$ such that 
$\phi_1-\phi_2 \neq \theta_j$ 
for all $j=1,\ldots, k$. 
The existence of such a pair $a_1$ and $a_2$ 
proves  the lemma. 
\end{proof}
The referee points out that the lemma is equivalent to the statement  
``There is no isometric embedding of $\ell^1(n),\,n>1,$ 
into $\ell^\infty(k)$ for any $k\in\N$." %\textcolor{red}{Reference}, 
The argument below validates this equivalence. 
  
Suppose $S:\ell^1(2)\to \ell^\infty(k)$ defined by 
$S(z_1,z_2):=(a_1z_1+b_1z_2,\ldots,a_kz_1+b_kz_2)$ 
is an isometry with smallest possible $k\in\N.$ 
Then, due to the minimality of $k,$ 
it follows that $|a_j|=|b_j|=1$ for $j=1,\ldots,k.$ 
Without loss of generality, we can assume that $a_j=1$ for $j=1,\ldots,n.$  
Using Lemma \ref{No finite dimensional embedding},  
we conclude that $S$ can not be an isometry. 
For the converse part, we note that Lemma \ref{No finite dimensional embedding} 
is equivalent to the statement that the linear map  
$S:\ell^1(2)\to \ell^\infty(k)$ defined by 
$S(z_1,z_2):=(z_1+e^{i\theta_1}z_2,\ldots,z_1+e^{i\theta_k}z_2)$ 
can not be an isometry.

Now, we prove the main theorem of this section.
\begin{thm}\label{No finite dimensional embedding}
	There is no isometric embedding of $\ell^{1}(2)$ into $M_k$ for any $k\in \N.$.
\end{thm}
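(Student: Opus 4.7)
The plan is to argue by contradiction. Suppose $T\colon \ell^1(2)\to M_k$ is an isometric embedding and set $A=T(1,0)$, $B=T(0,1)$. The hypothesis gives $\|A\|=\|B\|=1$ and $\|z_1A+z_2B\|=|z_1|+|z_2|$ for all $z_1,z_2\in\C$; in particular $\|A+e^{i\theta}B\|=2$ for every $\theta\in[0,2\pi)$.

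The first step is a rigidity extraction in the underlying Hilbert space. For each $\theta$, finite-dimensionality of $M_k$ guarantees a unit vector $v_\theta\in\C^k$ with $\|(A+e^{i\theta}B)v_\theta\|=2$. The chain
\[
2 \;=\; \|(A+e^{i\theta}B)v_\theta\| \;\le\; \|Av_\theta\|+\|Bv_\theta\| \;\le\; \|A\|+\|B\| \;=\; 2
\]
collapses to equalities, so $\|Av_\theta\|=\|Bv_\theta\|=1$, and the triangle-inequality equality condition in a Hilbert space forces $Av_\theta$ and $e^{i\theta}Bv_\theta$ to be non-negative scalar multiples of one another; being unit vectors, they coincide. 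Hence $Av_\theta=e^{i\theta}Bv_\theta$.

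The second step converts this vector identity into an eigenvalue statement for $X:=B^*A\in M_k$. Note $\|X\|\le \|B^*\|\|A\|=1$. Using the identity from the previous paragraph,
\[
\langle Xv_\theta,v_\theta\rangle \;=\; \langle Av_\theta,Bv_\theta\rangle \;=\; \langle e^{i\theta}Bv_\theta,Bv_\theta\rangle \;=\; e^{i\theta}\|Bv_\theta\|^2 \;=\; e^{i\theta}.
\]
Consequently $1=|\langle Xv_\theta,v_\theta\rangle|\le \|Xv_\theta\|\|v_\theta\|\le \|X\|\le 1$, so the Cauchy--Schwarz inequality is an equality, which forces $Xv_\theta$ to be a scalar multiple of $v_\theta$. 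Taking the inner product with $v_\theta$ identifies the scalar as $e^{i\theta}$, so $v_\theta$ is an eigenvector of $X$ with eigenvalue $e^{i\theta}$.

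As $\theta$ ranges over $[0,2\pi)$, this exhibits infinitely many distinct eigenvalues of the single matrix $X\in M_k$, contradicting the fact that $X$ has at most $k$ eigenvalues. The only delicate point is the first step: converting the continuous family of operator-norm identities into the rigid pointwise vector identity $Av_\theta=e^{i\theta}Bv_\theta$. Once that is secured, the Cauchy--Schwarz passage to an eigenvector of $B^*A$ and the cardinality argument are essentially mechanical. A natural alternative would be to reduce to an isometric embedding into $\ell^\infty(K)$ and invoke Lemma~\ref{Reason for non finite dimensinality}, but that route requires an extra argument producing finitely many norming functionals on $\mathrm{span}(A,B)$; the direct Hilbert-space approach sidesteps this.
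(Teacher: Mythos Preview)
Your proof is correct and takes a genuinely different route from the paper's. The paper first dilates the two contractions $T_1,T_2$ to a pair of $2k\times 2k$ unitaries $U_1,U_2$ via the Halmos dilation, observes that $(a_1,a_2)\mapsto a_1U_1+a_2U_2$ is again an isometric embedding, and then replaces $(U_1,U_2)$ by $(I,U_1^{*}U_2)$ and diagonalizes the latter unitary. This reduces the problem to an embedding into $\ell^\infty(2k)$, where Lemma~\ref{Reason for non finite dimensinality} applies directly. In other words, the paper does exactly the ``alternative'' you mention at the end, with the Halmos dilation supplying the missing step you flagged (producing finitely many norming functionals).

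Your argument bypasses both the dilation and Lemma~\ref{Reason for non finite dimensinality}: from $\|A+e^{i\theta}B\|=2$ you extract a norm-attaining unit vector $v_\theta$, use the Hilbert-space equality case of the triangle inequality to get $Av_\theta=e^{i\theta}Bv_\theta$, and then a Cauchy--Schwarz rigidity step shows $v_\theta$ is an eigenvector of $B^{*}A$ with eigenvalue $e^{i\theta}$, contradicting finite dimensionality. This is more self-contained and arguably more elementary. The paper's approach, on the other hand, makes the connection to the $\ell^\infty(k)$ non-embedding statement explicit, which is conceptually pleasant and is the content of the remark following Lemma~\ref{Reason for non finite dimensinality}. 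A minor simplification of your Step~2: once you know $Av_\theta=e^{i\theta}Bv_\theta$ and $\|Bv_\theta\|=\|B\|\,\|v_\theta\|$, the latter already forces $B^{*}Bv_\theta=v_\theta$, so $B^{*}Av_\theta=e^{i\theta}B^{*}Bv_\theta=e^{i\theta}v_\theta$ directly, without the separate Cauchy--Schwarz passage.
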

\begin{proof}
Suppose there is a $k-dimensional$ isometric embedding $\phi$ of $\ell^{1}(2)$. 
Then $\phi$ is induced by a pair of operators $T_1, T_2\in M_k$ of norm $1,$ defined by the rule,    
$\phi(a_1,a_2)=a_1T_1 + a_2T_2.$  
Let $U_1$ and $U_2$ in $M_{2k}$ be the pair of unitary maps: 
$$
U_i:= \begin{pmatrix}T_i & D_{T_i^*} \\ D_{T_i} & -T_i^*, \end{pmatrix} i=1,2,
$$
where $D_{T_i}$ is the positive square root of the (positive) operator $I-T_i^* T_i.$   
Now, we have 
\[P_{\C^k}(a_1U_1 + a_2U_2)_{|\C^k}=a_1T_1 + a_2T_2.\]
(This dilating pair of unitary maps is not necessarily commuting nor is 
 it a power dilation!)  
Thus 
$\psi:\ell^{1}(2)\to M_{2k}(\C)$ 
defined by 
$\psi(a_1,a_2)=a_1U_1 + a_2U_2$
is also an isometry. 
Since norms are preserved under unitary operations, without loss of generality,    
we assume $U_1=I$ and $U_2$ 
to be a diagonal unitary, say, $D.$ 
Let 
$D={\rm diag}\big(e^{i\theta_1},\ldots,e^{i\theta_{2k}}\big)$. 
Applying Lemma \ref{Reason for non finite dimensinality}, 
we obtain complex numbers $a_1$ and $a_2$ 
such that 
\[\max\limits_{j=1,\ldots, 2k}\big|a_1 + e^{i\theta_j}a_2\big|
< \big|a_1\big| + \big|a_2\big|.\]
Hence $\psi$ cannot be an isometry, which contradicts the 
hypothesis that 
$\phi$ is an isometry.    
\end{proof}

\begin{rem}
Let $X$ be a finite dimensional normed linear space. Suppose $X$ is embedded isometrically in $M_k$ for some $k\in \N$, then the standard dual operator space structure on $X^*$ need not admit an embedding in $M_n$ for any $n\in \N$.
\end{rem}
\begin{rem}
	An amusing corollary to this theorem is that the two spaces $\ell^{\infty}(n)$ and $\ell^{1}(n)$ 
	cannot be isometrically isomorphic for $n>1.$
\end{rem}

\begin{rem}
	Prof. G. Pisier points out that Theorem \ref{No finite dimensional embedding} may be true if one replaces $M_k$ by $\mathcal{K}(\h)$, the set of all compact operators on an infinite dimensional separable Hilbert space $\h$. 
\end{rem}

\section{Infinite dimensional embeddings of \texorpdfstring{$\ell^{1}(n)$}{TEXT}}
In this section we construct operator space structure on $\ell^1(n)$ $(n\geq 3)$ which is not completely isometric to MIN structure of $\ell^1(n).$
%First we find a class of embeddings of $\ell^1(n)$ into $\mathcal{B}(\h).$ All of them induce operator space structures on $\ell^1(n)$ which turns out to be completely isometric to MIN structure on $\ell^1(n).$ Then we combine this with a result of Parrot to construct the operator space structure on $\ell^1(n)$ which is different from MIN structure on $\ell^1(n).$   

Let $\h_i$ be a Hilbert space and $T_i$ 
be a contraction on 
$\h_i$ for $i=1,\ldots,n.$ 
Assume that the unit circle $\T$ is contained in $\sigma(T_i),$ the spectrum of $T_i,$ for $i=1,\ldots,n$. 
Denote 
$$\tilde{T_1}=T_1\otimes I^{\otimes (n-1)},
\tilde{T_2}=I\otimes T_2\otimes I^{\otimes (n-2)},\ldots ,
\tilde{T_n}=I^{\otimes (n-1)}\otimes T_n$$
and $\boldsymbol{T}=(\tilde{T_1},\ldots,\tilde{T_n}).$

\begin{thm}\label{Infiniteoperator}
Suppose the operators $\tilde{T}_1,\ldots,\tilde{T}_n$ are defined as above. Then, the function 
$$f_{\boldsymbol{T}}:\ell ^1(n)\to \mathcal B(\h_1 \otimes\cdots \otimes \h_n)$$
defined by $f_{\boldsymbol{T}}(a_1,\ldots,a_n):=a_1\tilde{T_1} +\cdots + a_n \tilde{T_n}$
is an isometry.
\end{thm}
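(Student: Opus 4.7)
The plan is to prove the two inequalities $\|f_{\boldsymbol{T}}(a_1,\ldots,a_n)\| \le \sum_i |a_i|$ and $\|f_{\boldsymbol{T}}(a_1,\ldots,a_n)\| \ge \sum_i |a_i|$ separately. The upper bound is immediate: each ampliation $\tilde{T}_i$ on $\h_1 \otimes \cdots \otimes \h_n$ has operator norm equal to $\|T_i\| \le 1$, so the triangle inequality gives $\|\sum_i a_i \tilde{T}_i\| \le \sum_i |a_i|$.

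For the lower bound, the strategy is to produce a sequence of unit vectors on which $\sum_i a_i \tilde{T}_i$ acts approximately as the scalar $\sum_i a_i \lambda_i$ for a carefully chosen tuple $(\lambda_i) \in \T^n$. First I would establish the standard fact that the unimodular spectrum of a contraction lies in its approximate point spectrum. Briefly: if $|\lambda|=1$ and $\lambda \in \sigma(T_i) \setminus \sigma_{\mathrm{ap}}(T_i)$, then $\bar\lambda$ is an eigenvalue of $T_i^*$ with a unit eigenvector $v$; the identity $\|T_i^* v\| = \|v\|$, combined with $T_i T_i^* \le I$, forces $T_i T_i^* v = v$, and therefore $T_i v = \lambda v$, contradicting $\lambda \notin \sigma_{\mathrm{ap}}(T_i)$. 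Because $\T \subset \sigma(T_i)$ by hypothesis, for each $\lambda_i \in \T$ there is a sequence of unit vectors $\{x_i^{(k)}\}_{k \in \N}$ in $\h_i$ with $\|(T_i - \lambda_i I) x_i^{(k)}\| \to 0$.

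Given a nonzero $(a_1,\ldots,a_n)$, I would then select $\lambda_i \in \T$ with $a_i \lambda_i = |a_i|$ (arbitrarily when $a_i = 0$) and form the elementary tensor $x^{(k)} := x_1^{(k)} \otimes \cdots \otimes x_n^{(k)}$, which is a unit vector in the tensor product by multiplicativity of the Hilbert-space tensor norm. Since $\tilde{T}_i - \lambda_i I$ acts nontrivially only on the $i$-th factor,
\[\|(\tilde{T}_i - \lambda_i I) x^{(k)}\| = \|(T_i - \lambda_i I) x_i^{(k)}\| \to 0,\]
so by the triangle inequality
\[\Big\| \sum_i a_i \tilde{T}_i x^{(k)} - \Big(\sum_i a_i \lambda_i\Big) x^{(k)} \Big\| \to 0.\]
Letting $k \to \infty$ yields $\|f_{\boldsymbol{T}}(a_1,\ldots,a_n)\| \ge \big|\sum_i a_i \lambda_i\big| = \sum_i |a_i|$, which completes the argument.

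I expect the main obstacle to be the spectral step—namely, passing from the hypothesis $\T \subset \sigma(T_i)$ to the existence of approximate eigenvectors at each $\lambda \in \T$, which uses the contractivity of $T_i$ in an essential way. The remaining ingredients (choice of phases aligning $a_i \lambda_i = |a_i|$, behavior of elementary tensors under the ampliations $\tilde{T}_i$, and the two triangle-inequality estimates) are routine bookkeeping.
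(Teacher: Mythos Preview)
Your proposal is correct and follows essentially the same route as the paper: triangle inequality for the upper bound, and for the lower bound the passage from $\T\subset\sigma(T_i)$ to approximate eigenvectors, then aligning phases $\lambda_i$ so that $a_i\lambda_i=|a_i|$ and testing on elementary tensors. The only cosmetic differences are that the paper obtains $\T\subset\sigma_a(T_i)$ via the boundary-of-spectrum inclusion (citing Conway) rather than your direct contractivity argument, and estimates the lower bound through the inner product $\langle\sum a_i\tilde{T}_i x^{(k)},x^{(k)}\rangle$ instead of your norm estimate $\|\sum a_i\tilde{T}_i x^{(k)}-(\sum a_i\lambda_i)x^{(k)}\|\to 0$.
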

\begin{proof}
Since 
$\mathbb{T}\subset \sigma (T_i)$ and 
$T_i$ is a contraction for $i=1,\ldots,n$, 
it follows that 
$\mathbb{T}\subset \partial\sigma (T_i)$ 
for $i=1,\ldots,n.$ 
From (cf. \cite[Proposition 6.7, Page 210]{Conway}), we have 
$\mathbb{T}\subset \sigma _a(T_i)$ for $i=1,\ldots,n,$ 
where $\sigma _a(T_i)$ is 
the approximate point spectrum of $T_i.$
Thus for any $i\in\{1,\ldots,n\}$ and $\lambda \in \mathbb{T},$ 
there exists a sequence of unit vectors 
$(x_{m}^{i})_{m\in\mathbb{N}}$ 
in $\h_i$ 
such that 
$$\| (T_i-\lambda)(x_{m}^{i})\| \longrightarrow 0\mbox{ as }  m\longrightarrow \infty.$$ 
Now, applying the Cauchy-Schwarz's inequality, 
we have 
\begin{align*}
	|\langle (T_i-\lambda)(x_{m}^{i}),(x_{m}^{i})\rangle |&
	\leq  \| (T_i-\lambda)(x_{m}^{i})\| \| (x_{m}^{i}) \| \\
	&= \| (T_i-\lambda)(x_{m}^{i})\| \longrightarrow 0   
\end{align*}
as $m\longrightarrow \infty.$ Hence  
$\langle T_i(x_{m}^{i}),(x_{m}^{i})\rangle \longrightarrow \lambda$ as $m\longrightarrow \infty.$
Let $(a_1,\ldots ,a_n)$ 
be any vector in $\ell ^1(n)$ such that none of its co-ordinates is zero.
Let $\lambda _1=e^{-i\arg(a_1)},\lambda _2=e^{-i\arg(a_2)},\ldots ,\lambda _n=e^{-i\arg(a_n)}.$
% be corresponding complex numbers in $\mathbb{T}$. 
 Now for each $i\in \{1,\ldots ,n\}$, 
 we have $(x_{m}^{i})_{m\in\mathbb{N}}$, 
 a sequence of unit vectors from  $\h_i$, 
 such that 
 $$\langle T_i(x_{m}^{i}),(x_{m}^{i})\rangle \longrightarrow \lambda _i\mbox{ as } m\longrightarrow \infty .$$
As $m$ goes to $\infty$, we have  
%\small{
\begin{align*}
	&\big|\langle (a_1T_1\otimes I^{\otimes (n-1)}+\cdots 
	+ a_nI^{\otimes (n-1)}\otimes T_n)(x_{m}^{1}\otimes\cdots\otimes x_{m}^{n}),
	(x_{m}^{1}\otimes \cdots\otimes x_{m}^{n}) \rangle \big|\\
	 &= \big|a_1\langle T_1 (x_{m}^{1}),(x_{m}^{1}))\rangle + 
	 \cdots + a_n\langle T_n (x_{m}^{n}),(x_{m}^{n}))\rangle \big|
	\longrightarrow  \big|a_1 \lambda _1 + \cdots + a_n \lambda _n\big|\\ 
	& =  |a_1| +  \cdots + |a_n| =  \| (a_1,\ldots ,a_n)\| _1.
\end{align*} 
%}
Hence 
$\| a_1\tilde{T_1} +\cdots + a_n \tilde{T_n}\| \geq \| (a_1,\ldots ,a_n)\| _1.$
Also 
	$$\|a_1\tilde{T_1}+ \cdots + a_n \tilde{T_n}\|
	\leq  |a_1|\| T_1 \| + \cdots + |a_n|\| T_n \|.$$
Hence 
$ \|a_1\tilde{T_1}+ \cdots + a_n \tilde{T_n} \| = \| (a_1,\ldots ,a_n)\| _1,$
proving that $f_{\boldsymbol{T}}$ is an isometry.

If some of the co-ordinates in the vector $(a_1, \ldots , a_n)$ are zero, the same argument, as above, remains valid after dropping those co-ordinates. 
\end{proof}

An adaptation of the technique involved in the proof of Theorem \ref{Infiniteoperator},
also proves the following theorem. 
\begin{thm}\label{LessI}
For $i=1,\ldots,n,$ let $T_i$ be a contraction on a Hilbert space $\h_i$ and 
$\T\subseteq \sigma(T_i).$  Denote $\tilde{T}_i=T_1\otimes \cdots\otimes T_i\otimes I_{\h_{i+1}\otimes\cdots \otimes \h_n}$ and $\boldsymbol{T}=(\tilde{T_1},\ldots,\tilde{T_n}).$
Then, the function 
$$g_{\boldsymbol{T}}:\ell ^1(n)\to \mathcal B(\h_1 \otimes\cdots \otimes \h_n)$$
defined by $g_{\boldsymbol{T}}(a_1,\ldots , a_n):=a_1\tilde{T_1} +\cdots + a_n \tilde{T_n}$
is an isometry.
\end{thm}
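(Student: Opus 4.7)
The plan is to mirror the proof strategy of Theorem~\ref{Infiniteoperator}, since the only change is in the tensor structure of the $\tilde{T}_i$: instead of the $i$-th factor being $T_i$ on the $i$-th slot with identities elsewhere, we now have $T_1\otimes\cdots\otimes T_i\otimes I\otimes\cdots\otimes I$. This makes the diagonal matrix coefficients multiply telescopically rather than independently, but the same choice of approximate eigenvectors will work after resolving a simple recursion for the phases.

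First, I would establish the upper bound. Since each $T_j$ is a contraction, every $\tilde{T}_i$ is a tensor product of contractions and is therefore itself a contraction on $\h_1\otimes\cdots\otimes\h_n$. The triangle inequality then gives
\[\|a_1\tilde{T}_1+\cdots+a_n\tilde{T}_n\|\le|a_1|+\cdots+|a_n|.\]

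For the lower bound, I would fix $(a_1,\ldots,a_n)\in\ell^1(n)$ with no zero coordinate and compute the diagonal coefficient of $\tilde{T}_i$ at a product vector $\boldsymbol{x}_m=x_m^1\otimes\cdots\otimes x_m^n$ of unit vectors. Because of the tensor form,
\[\langle\tilde{T}_i\boldsymbol{x}_m,\boldsymbol{x}_m\rangle=\prod_{j=1}^{i}\langle T_j x_m^j,x_m^j\rangle.\]
If each sequence $(x_m^j)_m$ is chosen so that $\langle T_j x_m^j,x_m^j\rangle\to\lambda_j$ for some $\lambda_j\in\T$, then the product tends to $\lambda_1\cdots\lambda_i$. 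To force the limit of the full diagonal coefficient to equal $|a_1|+\cdots+|a_n|$, I need $a_i\,\lambda_1\cdots\lambda_i=|a_i|$ for every $i$, i.e.\ $\prod_{j=1}^{i}\lambda_j=e^{-i\arg(a_i)}$. Solving this system recursively gives
\[\lambda_1=e^{-i\arg(a_1)},\qquad\lambda_i=e^{i(\arg(a_{i-1})-\arg(a_i))}\quad(i\ge 2),\]
and each $\lambda_i$ lies on $\T$.

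Next, I would invoke the same spectral input as in Theorem~\ref{Infiniteoperator}: since $T_j$ is a contraction and $\T\subseteq\sigma(T_j)$, we have $\T\subseteq\partial\sigma(T_j)\subseteq\sigma_a(T_j)$ by \cite[Proposition 6.7, Page~210]{Conway}. Hence for each prescribed $\lambda_j\in\T$ there is a sequence of unit vectors $(x_m^j)_{m\in\N}$ in $\h_j$ with $\|(T_j-\lambda_j)x_m^j\|\to 0$, and the Cauchy--Schwarz estimate then gives $\langle T_j x_m^j,x_m^j\rangle\to\lambda_j$. Forming $\boldsymbol{x}_m=x_m^1\otimes\cdots\otimes x_m^n$ (a unit vector in the tensor product) and using Cauchy--Schwarz on the full operator,
\[\bigl|\langle(a_1\tilde{T}_1+\cdots+a_n\tilde{T}_n)\boldsymbol{x}_m,\boldsymbol{x}_m\rangle\bigr|\le\|a_1\tilde{T}_1+\cdots+a_n\tilde{T}_n\|,\]
while the left-hand side tends to $\bigl|\sum_i a_i\lambda_1\cdots\lambda_i\bigr|=\sum_i|a_i|$ by the phase choice above. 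This produces the reverse inequality and, combined with the upper bound, the isometry claim. For vectors with some vanishing coordinates I would drop the corresponding slots and apply the same argument to the remaining indices.

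The only subtlety I see, and hence the main (if mild) obstacle, is the coupling introduced by the products $\lambda_1\cdots\lambda_i$: one cannot choose the $\lambda_i$ independently slot-by-slot as in Theorem~\ref{Infiniteoperator}, but must solve the triangular recursion above. Once that is recognized, the spectral input and the Cauchy--Schwarz argument transplant with no change.
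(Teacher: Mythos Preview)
Your proposal is correct and is precisely the adaptation the paper has in mind; the paper does not spell out a separate proof but simply notes that the argument for Theorem~\ref{Infiniteoperator} carries over. You have correctly identified the one new wrinkle, namely that the diagonal coefficient of $\tilde{T}_i$ at a product vector now involves the cumulative product $\lambda_1\cdots\lambda_i$, and your recursive choice $\lambda_1=e^{-i\arg(a_1)}$, $\lambda_i=e^{i(\arg(a_{i-1})-\arg(a_i))}$ for $i\ge2$ resolves this exactly as required.
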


\begin{rem}
We show that all the operator spaces induced by the isometries defined in Theorem \ref{Infiniteoperator} are completely isometric to the MIN structure. 
%This includes Remark \ref{MINon2} and Remark \ref{MINon3}. 

Suppose $T_1,\ldots,T_n$ are contractions 
on Hilbert spaces $\h_1,\ldots,\h_n$ respectively 
with the property that 
$\T\subseteq \sigma(T_i)$ for $i=1,\ldots,n$.  
Denote $\tilde{T}_1=T_1\otimes I_{\h_2}\otimes\cdots \otimes I_{\h_n},\ldots, 
\tilde{T}_n=I_{\h_1}\otimes \cdots \otimes I_{\h_{n-1}}\otimes T_n.$ 
Then the map $f_{\boldsymbol{T}}$ defined as in the 
Theorem \ref{Infiniteoperator} is an isometry. 
The dilation theorem due to Sz.-Nagy
(cf. \cite[Theorem 1.1, Page 7]{Paulsen}), 
gives unitary maps  
$U_j:\mathbb{K}_j\to \mathbb{K}_j,$ dilating the contraction 
$T_j,$ for $j=1,\ldots,n$. 
The operator space structure defined by the isometry 
$g:\ell^1(n)\to \mathcal{B}(\mathbb{K}_1\otimes \cdots \otimes\mathbb{K}_n),$
where $g(a_1,\ldots,a_n)=a_1U_1\otimes I_{\mathbb{K}_2\otimes\cdots \otimes\mathbb{K}_n}+ \cdots +a_nI_{\mathbb{K}_1\otimes \cdots\otimes\mathbb{K}_{n-1}} \otimes U_n,$ 
is no lesser than that of $f_{\boldsymbol{T}}$. Since $U_1,\ldots,U_n$ are unitary maps, $C^*-$algebra generated by $U_1\otimes I_{\mathbb{K}_2\otimes\cdots \otimes \mathbb{K}_n},\ldots, I_{\mathbb{K}_1\otimes \cdots\otimes\mathbb{K}_{n-1}}\otimes U_n$ is commutative. From (cf. \cite[Proposition 1.10, Page 24]{GP}), we conclude that $g$ is a complete isometry.

It can similarly be shown that all the operator spaces induced by the isometries defined in Theorem \ref{LessI} are completely isometric to the MIN structure.
\end{rem}
\subsection{Operator space structures on \texorpdfstring{$\ell ^{1}(n)$}{TEXT} different from the MIN structure}
Parrott \cite{Pexample} provides an example of 
a contractive homomorphism on $\mathcal{A}(\D^3)$ 
which is not completely contractive. 
(Here $\mathcal{A}(\D^3)$ is the closure, 
with respect to the supremum norm on $\D^3$, 
of the polynomial in 3 complex variables.) 
Using a triple $(I,U,V)$(defined below) 
of $2\times 2$ unitaries, it was shown in \cite{GMParrott} 
that examples due to Parrott may be easily 
thought of as examples of linear contractive maps 
on $\ell^1(3)$ which are not completely contractive. 
Indeed this realization shows that 
the operator space structure on $\ell^1(3)$ can not be unique. 
In this section, using the example from \cite{GMParrott}, 
we give an explicit operator space structure 
$\|\cdot\|_{\texttt{os}}$ on $\ell^1(3)$, 
which is not completely isometric to the MIN structure

% and we show that 
%$\|(I,U,V)\|_{\rm {MAX}}=\|(I,U,V)\|_{\texttt{os}}=3$ 
%but $\|(I,U,V)\|_{\rm{MIN}}<3.$ 

Consider the following $2\times 2$ unitary operators:
\[I_2=\left(
	\begin{array}{cc}
		1 & 0\\
		0 & 1\\
	\end{array}
\right),\,
U:=\left(
	\begin{array}{cc}
		\frac{1}{2} & \frac{\sqrt{3}}{2}\\
		\frac{\sqrt{3}}{2} & -\frac{1}{2}\\
	\end{array}
\right)\,{ \rm and }\,
V:=\left(
	\begin{array}{cc}
		\frac{1}{2} & -\frac{\sqrt{3}}{2}\\
		\frac{\sqrt{3}}{2} & \frac{1}{2}\\
	\end{array}
  \right).
\]
It is clear that the map $h:\ell^1(3)\to M_2,$ 
defined by $h(z_1,z_2,z_3)=z_1I+z_2U+z_3V,$ is of norm at most $1.$ 
The computations done in \cite{GMParrott} includes the following:
\begin{equation}\label{equal3}
\|I\otimes I+U\otimes U+V\otimes V\|=3 
\end{equation}
and 
\begin{equation}\label{less3}
\sup_{z_1,z_2,z_3\in \D}\|z_1 I+z_2 U+z_3 V\|<3.
\end{equation}
Choose a diagonal operator 
$D$ on $\ell ^2(\mathbb{Z})$ 
such that 
$\| D \|\leq 1$ and $\mathbb{T}\subset \sigma(D)$. 
Define 
$$\tilde{T_1}:=\left[
\begin{array}{ll}
      I & 0\\
     0 & D \\
\end{array} 
\right],\tilde{T_2}:=\left[
\begin{array}{ll}
      U & 0\\
     0 & D \\
\end{array} 
\right],\tilde{T_3}:=\left[
\begin{array}{ll}
      V & 0\\
     0 & D \\
\end{array} 
\right]$$
and 
$\hat{T_{1}}=\tilde{T_1}\otimes I\otimes I,\,\hat{T_{2}}=I\otimes\tilde{T_2}\otimes I,\,\hat{T_{3}}=I\otimes I\otimes \tilde{T_n}.$
Let $S_1:=\hat{T_1}\oplus I,\,S_2:=\hat{T_2}\oplus U,\,S_3:=\hat{T_3}\oplus V$
be operators on a Hilbert space $\mathbb{K}$. 
Define 
$S:\ell ^1(3)\longrightarrow B(\mathbb{K})$
by 
$S(e_1)=S_1,\,S(e_2)=S_2,\,S(e_3)=S_3$
and extend it linearly.

From Theorem \ref{Infiniteoperator}, 
we know that the function 
$(z_1,z_2,z_3)\mapsto z_1\hat{T}_1+z_2\hat{T}_2+z_3\hat{T}_3$ is an isometry  
and since $h$ is of norm at most $1,$ 
it follows that the map $(z_1,z_2,z_3)\mapsto z_1S_1+z_2S_2+z_3S_3$ is also an isometry.   
Consequently, there is an operator space structure \texttt{os} on $\ell ^1(3)$ 
for which $S$ is a complete isometry.  Also from \eqref{equal3}, we have
\begin{align*}
	\left \|S_1\otimes I+S_2\otimes U+S_3\otimes V\right\|
	& \geq \left \|I\otimes I+U\otimes U+V\otimes V\right \| =3.
\end{align*}
Thus $\|(I,U,V)\|_{\texttt{os}}=3,$ as norm of $(I,U,V)$ is at most $3$ 
under any operator space structure on $\ell^1(3).$
On the other hand, from \eqref{less3}, we have 
\[\|(I,U,V)\|_{\rm {MIN}}=\sup_{z_1,z_2,z_3\in \D}\|z_1 I+z_2 U+z_3 V\|<3.\]
It follows from \cite[Theorem 14.1]{Paulsen} that if there is a map $\phi:(\ell^1(3),\rm {MIN})\to (\ell^1(3),\|\cdot\|_{\rm {os}})$ 
which is a complete isometry, then the identity
$I:(\ell^1(3),\rm {MIN})\to (\ell^1(3),\|\cdot\|_{\rm {os}})$ 
must be also a complete isometry. 
Therefore the operator space structure $\|\cdot\|_{\texttt{os}}$ 
is different from the MIN structure. 
However, although $\|(I,U,V)\|_{\rm {MAX}}=3,$ we are unable to decide whether the operator space structure $\|\cdot\|_{\texttt{os}}$ is completely isometric to the MAX operator space structure or not. 

\textbf{Acknowledgement:} We are very grateful to G. Misra for several fruitful discussions and suggestions. We are also thankful to the referee for many valuable suggestions. 

\bibliographystyle{amsalpha}\bibliography{Embeddings_of_l1_Space}

\def\cprime{$'$}
\providecommand{\bysame}{\leavevmode\hbox to3em{\hrulefill}\thinspace}
\providecommand{\MR}{\relax\ifhmode\unskip\space\fi MR }
% \MRhref is called by the amsart/book/proc definition of \MR.
\providecommand{\MRhref}[2]{%
  \href{http://www.ams.org/mathscinet-getitem?mr=#1}{#2}
}
\providecommand{\href}[2]{#2}
\begin{thebibliography}{Con90}

\bibitem[Con90]{Conway}
John~B. Conway, \emph{A course in functional analysis}, second ed., Graduate
  Texts in Mathematics, vol.~96, Springer-Verlag, New York, 1990. \MR{1070713
  (91e:46001)}

\bibitem[Mis94]{GMParrott}
G.~Misra, \emph{Completely contractive {H}ilbert modules and {P}arrott's
  example}, Acta Math. Hungar. \textbf{63} (1994), no.~3, 291--303. \MR{1261472
  (96a:46097)}

\bibitem[Par70]{Pexample}
Stephen Parrott, \emph{Unitary dilations for commuting contractions}, Pacific
  J. Math. \textbf{34} (1970), 481--490. \MR{0268710 (42 \#3607)}

\bibitem[Pau92]{VP}
Vern~I. Paulsen, \emph{Representations of function algebras, abstract operator
  spaces, and {B}anach space geometry}, J. Funct. Anal. \textbf{109} (1992),
  no.~1, 113--129. \MR{1183607 (93h:46001)}

\bibitem[Pau02]{Paulsen}
Vern Paulsen, \emph{Completely bounded maps and operator algebras}, Cambridge
  Studies in Advanced Mathematics, vol.~78, Cambridge University Press,
  Cambridge, 2002. \MR{1976867 (2004c:46118)}

\bibitem[Pis03]{GP}
Gilles Pisier, \emph{Introduction to operator space theory}, London
  Mathematical Society Lecture Note Series, vol. 294, Cambridge University
  Press, Cambridge, 2003. \MR{2006539 (2004k:46097)}

\end{thebibliography}
\end{document}